\theoremstyle{plain}
\newtheorem{corollary}{Corollary}
\newtheorem{definition}{Definition}
\newtheorem{proposition}{Proposition}
\newtheorem{theorem}{Theorem}
\numberwithin{equation}{section}
\begin{document}
\title[Pre-Markov]{Pre-Markov Operators}
\author{H\={u}lya Duru}
\address{Istanbul University, Faculty of Science, Mathematics Department,
Vezneciler-Istanbul, 34134, Turkey }
\email{hduru@istanbul.edu.tr}
\author{Serkan Ilter}
\curraddr{Istanbul University, Faculty of Science, Mathematics Department,
Vezneciler-Istanbul, 34134, Turkey }
\email{ilters@istanbul.edu.tr}
\subjclass[2010]{Primary 47B38, 46B42}
\keywords{Markov Operator, f-algebra, algebra homomorphism, lattice
homomorphism,contractive operator, Arens Multiplication}

\begin{abstract}
A positive linear operator $T$ between two unital $f$-algebras, with point
separating order duals, $A$ and $B$ is called a Markov operator for which $%
T\left( e_{1}\right) =e_{2}$ where $e_{1},e_{2}$ are the identities of $A$
and $B$ respectively. Let $A$ and $B$ be semiprime $f$-algebras with point
separating order duals such that their second order duals $A^{\sim \sim }$
and $B^{\sim \sim }$ are unital $f$-algebras. In this case, we will call a
positive linear operator $T:A\rightarrow B$ \ to be a pre-Markov operator,
if the second adjoint operator of $T$ is a Markov operator. A positive
linear operator $T$ between two semiprime $f$-algebras, with point
separating order duals, $A$ and $B$ is said to be contractive if $Ta\in
B\cap \left[ 0,I_{B}\right] $ whenever $a\in A\cap \left[ 0,I_{A}\right] $,
where $I_{A}$ and $I_{B}$ are the identity operators on $A$ and $B$
respectively. In this paper we characterize pre-Markov algebra
homomorphisms. In this regard, we show that a pre-Markov operator is an
algebra homomorphism if and only if its second adjoint operator is an
extreme point in the collection of all Markov operators from $A^{\sim \sim }$
to $B^{\sim \sim }$. Moreover we characterize extreme points of contractive
operators from $A$ to $B$. In addition, we give a condition that makes an
order bounded algebra homomorphism is a lattice homomorphism.
\end{abstract}

\maketitle

\section{\textbf{Introduction}}
A positive linear operator $T$ between two unital $f$-algebras, with point
separating order duals, $A$ and $B$ is called a Markov operator for which $%
T\left( e_{1}\right) =e_{2}$ where $e_{1}$, $e_{2}$ are the identities of $A$
and $B$ respectively. Let $A$ and $B$ be semiprime $f$-algebras with point
separating order duals such that their second order duals $A^{\sim \sim }$
and $B^{\sim \sim }$ are unital $f$-algebras. In this case, we will call a
positive linear operator $T:A\rightarrow B$ \ to be a pre-Markov operator,
if the second adjoint operator of $T$ is a Markov operator. A positive
linear operator $T$ between two semiprime $f$-algebras, with point
separating order duals, $A$ and $B$ is said to be contractive if $Ta\in
B\cap \left[ 0,I_{B}\right] $ whenever $a\in A\cap \left[ 0,I_{A}\right] $,
where $I_{A}$ and $I_{B}$ are the identity operators on $A$ and $B$
respectively.

The collection of all pre-Markov operators is a convex set. In this paper,
first of all, we characterize pre-Markov algebra homomorphisms. In this
regard, we show that a pre-Markov operator is an algebra homomorphism if and
only if its second adjoint operator is an extreme point in the collection of
all Markov operators from $A^{\sim \sim }$ to $B^{\sim \sim }$ (Theorem \ref{t1}%
). In addition, we characterize the extreme points of all contractive
operators $T:A\rightarrow B$ whenever $A$ and $B$ are Archimedean semiprime
f-algebras provided $B$ is relatively uniformly complete (Proposition \ref{p3}). For the second aim, let $A$ and $B$ be Archimedean semiprime f-algebras and $%
T:A\rightarrow B$ a linear operator. Huijsman and de Pagter proved in \cite{Ref8}\textrm{\ }the following:

\noindent (1) If $T$ is a positive algebra homomorphism then it s a lattice
homomorphism.

\noindent (2) In addition, if the domain $A$ is relatively uniformly
complete and $T$ is an algebra homomorphism then it s a lattice homomorphism
and the assumption that the domain $A$ of $T$ is relatively uniformly
complete is not reduntant (Theorem $5.1$ and Example $5.2$.).

\noindent (3) In addition, if the domain $A$ has a unit element and $T$ is
an order bounded algebra homomorphism then it s a lattice
homomorphism (Theorem $5.3$).

We prove that any order bounded algebra homomorphism $T:A\rightarrow B$ is a
lattice homomorphism, if the region $B$ is relatively uniformly complete
(Corollary \ref{c3}). In this regard, first we give an alternate proof of Lemma $6$
in \cite{Ref10}\textrm{\ }for order bounded operators with the
relatively uniformly complete region instead of positive operators
with Dedekind complete region $\left( \text{Proposition }\ref{p4}\text{ and }%
\ref{p5}\right) $. In the last part, we give a necessary and sufficient condition for a
positive operator to be a lattice homomorphism (Proposition \ref{p7}).

\section{\textbf{Preliminaries}}

For unexplained terminology and the basic results on vector lattices and
semiprime $f$-algebras we refer to \cite{Ref1,Ref11,Ref13,Ref15}\textrm{. }The
real algebra $A$ is called a Riesz algebra or lattice-ordered algebra if $A$
is a Riesz space such that $ab\in A$ whenever $a,b$ are positive elements in
$A.$ The Riesz algebra is called an $f$-algebra if $A$ satisfies the
condition that
\begin{equation*}
a\wedge b=0\text{ implies }ac\wedge b=ca\wedge b=0\text{ for all }0\leq c\in
A\text{.}
\end{equation*}%
In an Archimedean $f$-algebra $A$, all nilpotent elements have index $2$.
Indeed, assume that $a^{3}=0$ for some $0\leq a\in A$. Since the equality $%
\left( a^{2}-na\right) \wedge \left( a-na^{2}\right) =0$ implies $\left(
a^{2}-na\right) \wedge a^{2}=\left( a^{2}-na\right) =0$ we get $a^{2}=0$ as $%
A$ is Archimedean. The same argument is true for all $n\geq 3$. Throughout
this paper \thinspace $A$ will show an Archimedean semiprime $f$-algebra
with point separating order dual $A^{\sim }$ \cite{Ref15}. By
definition, if zero is the unique nilpotent element of $A$, that is, $%
a^{2}=0 $ implies $a=0$, $A$ is called semiprime f-algebra. It is well known
that every $f$-algebra with unit element is semiprime.

Let $A$ be a lattice ordered algebra. If $A$ is a lattice ordered space,
then the first order dual space $A^{\sim }$ of $A$ is defined as the
collection of all order bounded linear functionals on $A$ and $A^{\sim }$ is
a Dedekind complete Riesz space. The second order dual space of $A$ is
denoted by $A^{\sim \sim }$. Let $a\in A$, $f\in A^{\sim }$ and $F,G\in
A^{\sim \sim }$. Define $f\cdot a\in A^{\sim }$, by%
\begin{equation*}
\left( f\cdot a\right) \left( b\right) =f\left( ab\right)
\end{equation*}%
and $F\cdot f\in A^{\sim }$, by%
\begin{equation*}
\left( F\cdot f\right) \left( a\right) =F\left( f\cdot a\right)
\end{equation*}%
and $F\cdot G\in A^{\sim \sim }$, by%
\begin{equation*}
\left( F\cdot G\right) \left( f\right) =F\left( G\cdot f\right)
\end{equation*}%
\bigskip The last equality is called the Arens multiplication in $A^{\sim
\sim }$ \cite{Ref2}.

The second order dual space $A^{\sim \sim }$ of a semiprime $f$-algebra $A$
is again an $f$-algebra with respect to the Arens multiplication \cite{Ref4}\textrm{. }In the literature, there are several studies, for
example \cite{Ref5,Ref6,Ref7,Ref9}, that respond the question "Under what
conditions does the $f$-algebra $A^{\sim \sim }$ have a unit element?".

Let $A$ and $B$ be semiprime $f$-algebras with point separating order duals
such that their second order duals $A^{\sim \sim }$ and $B^{\sim \sim }$
have unit elements $E_{1}$ and $E_{2}$ respectively. Let $T:A\rightarrow B$
be an order bounded operator. We denote the second adjoint operator of $T$
by $T^{\ast \ast }$. Since $A$ and $B$ have point separating order duals,
the linear operator $J_{1}:A\rightarrow A^{\sim \sim }$, which assigns to $%
a\in A$ the linear functional $\widehat{a}$ defined on $A^{\sim }$ by $%
\widehat{a}\left( f\right) =f\left( a\right) $ for all $a\in A$, is an
injective algebra homomorphism. Therefore we will identify $A$ with $%
J_{1}\left( A\right) $, and $B$ with $J_{2}\left( B\right) $ in the similar
sense.

\begin{definition}
Let $A$ and $B$ be semiprime $f$-algebras with point separating order duals
such that their second order duals $A^{\sim \sim }$ and $B^{\sim \sim }$ are
unital $f$-algebras. In this case, we call a positive linear operator $%
T:A\rightarrow B$ \ to be a pre-Markov operator, if the second adjoint
operator of $T$ is a Markov operator. That is, the second adjoint operator $%
T^{\ast \ast }:A^{\sim \sim }\rightarrow B^{\sim \sim }$ of $T$ is a
positive linear and $T^{\ast \ast }\left( E_{1}\right) =E_{2}$, where $E_{1}
$ and $E_{2}$ are the unitals of $A$ and $B$ respectively.
\end{definition}

Recall that a positive operator $T:A\rightarrow B$ satisfying $0\leq T\left(
a\right) \leq E_{2}$ whenever $0\leq a\leq E_{1}$ is called a contractive
operator.

In this point we remark that , if $A$ and $B$ are semiprime $f$-algebras
with point separating order duals and $T:A\rightarrow B$ \ is a positive
linear operator, then $T^{\ast \ast }$ is positive. Indeed, let $0\leq F\in
A^{\sim \sim }$ and $0\leq g\in B^{\sim }$. Then $0\leq g\circ T\in A^{\sim
} $ and therefore $F\left( g\circ T\right) =T^{\ast \ast }\left( F\right)
\geq 0$.

\begin{proposition}
\label{p1}Let $A$ and $B$ be semiprime $f$-algebras with point separating
order duals such that their second order duals $A^{\sim \sim }$ and $B^{\sim
\sim }$ have unit elements $E_{1}$ and $E_{2}$ respectively. $T:A\rightarrow
B$ is contractive if and only if $T^{\ast \ast }$ is contractive.
\end{proposition}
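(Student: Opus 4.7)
My plan is to prove the two implications separately and both via direct manipulation of the canonical embeddings $J_{1}:A\to A^{\sim\sim}$ and $J_{2}:B\to B^{\sim\sim}$. The direction ``$T^{\ast\ast}$ contractive $\Rightarrow T$ contractive'' should reduce to the observation that $T^{\ast\ast}$ extends $T$ under these embeddings: unravelling the definition gives
\begin{equation*}
T^{\ast\ast}(J_{1}(a))(g)=J_{1}(a)(g\circ T)=g(Ta)=J_{2}(Ta)(g)
\end{equation*}
for every $a\in A$ and $g\in B^{\sim}$, so $T^{\ast\ast}\circ J_{1}=J_{2}\circ T$. If $0\le a\le E_{1}$ in $A^{\sim\sim}$, contractivity of $T^{\ast\ast}$ applied to $J_{1}(a)$ immediately yields $0\le J_{2}(Ta)\le E_{2}$, i.e.\ $Ta\in B\cap[0,E_{2}]$.

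For the converse, I would first invoke the remark preceding the proposition to conclude that $T^{\ast\ast}$ is positive, reducing the task to showing $T^{\ast\ast}F\le E_{2}$ for each $F\in A^{\sim\sim}$ with $0\le F\le E_{1}$. Positivity then gives $T^{\ast\ast}F\le T^{\ast\ast}E_{1}$, so the problem collapses to the single inequality $T^{\ast\ast}E_{1}\le E_{2}$. To prove this I would use the standard fact that, because $A^{\sim}$ separates the points of $A$, the copy $J_{1}(A)$ is order dense in $A^{\sim\sim}$; consequently the upward directed family $\mathcal{D}=\{J_{1}(a):a\in A,\ 0\le J_{1}(a)\le E_{1}\}$ has supremum $E_{1}$ in $A^{\sim\sim}$, and this supremum is computed pointwise on $(A^{\sim})^{+}$ because the order of $A^{\sim\sim}=(A^{\sim})^{\sim}$ is the pointwise order on the positive cone.

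Fixing $g\in(B^{\sim})^{+}$, positivity of $T$ places $g\circ T$ in $(A^{\sim})^{+}$, so the previous observation gives $(T^{\ast\ast}E_{1})(g)=E_{1}(g\circ T)=\sup_{a\in\mathcal{D}}g(Ta)$. Contractivity of $T$ makes each $Ta$ satisfy $J_{2}(Ta)\le E_{2}$, so each $g(Ta)\le E_{2}(g)$, and taking the supremum yields $(T^{\ast\ast}E_{1})(g)\le E_{2}(g)$. Since $g$ is arbitrary in $(B^{\sim})^{+}$, this completes the proof of $T^{\ast\ast}E_{1}\le E_{2}$. The main obstacle I anticipate is exactly this last pointwise-supremum step: $T^{\ast\ast}$ is not a priori order continuous, so one has to see that evaluating $T^{\ast\ast}E_{1}$ at a positive functional $g\in B^{\sim}$ reduces, via the definition of the adjoint, to evaluating $E_{1}$ at $g\circ T\in(A^{\sim})^{+}$, where the order-density argument in $A^{\sim\sim}$ then supplies the needed pointwise identity.
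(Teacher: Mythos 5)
Your overall architecture coincides with the paper's: the easy direction uses $T^{\ast\ast}\circ J_{1}=J_{2}\circ T$ together with contractivity of $T^{\ast\ast}$ on $J_{1}(a)$, and the hard direction reduces to the single inequality $T^{\ast\ast}E_{1}\leq E_{2}$, verified by evaluating at a positive $g\in B^{\sim}$ and writing $E_{1}(g\circ T)$ as a supremum of values $g(Ta)$ with $a\in A\cap[0,E_{1}]$. That is exactly the paper's computation.

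The genuine gap is in how you justify the identity $E_{1}(f)=\sup f\left(A\cap[0,E_{1}]\right)$ for $0\leq f\in A^{\sim}$. You derive it from the claim that, since $A^{\sim}$ separates the points of $A$, the image $J_{1}(A)$ is order dense in $A^{\sim\sim}$. That claim is false in general, even under all the standing hypotheses of the proposition: take $A=C[0,1]$, a unital semiprime $f$-algebra with separating order dual whose bidual is unital. The functional $F\in A^{\sim\sim}$ given by $F(\mu)=\mu(\{0\})$ is positive and nonzero, yet any $0\leq h\in C[0,1]$ with $\widehat{h}\leq F$ must satisfy $h(x)\leq 0$ for every $x\neq 0$ (test against $\delta_{x}$), hence $h=0$; so $J_{1}(A)$ is not order dense in $A^{\sim\sim}$. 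What you actually need is much weaker and is specific to the unit element: that $E_{1}$ is the pointwise supremum, on $(A^{\sim})^{+}$, of the directed family $J_{1}(A)\cap[0,E_{1}]$. This is precisely the cited result of Jaber \cite{Ref9} that the paper invokes, and it is a theorem about unit elements of order biduals (tied to the existence of approximate units in $A$), not a consequence of any general density of $A$ in $A^{\sim\sim}$. Replace your order-density argument by an appeal to that result and the rest of your proof goes through verbatim; as written, the supporting lemma you rely on is simply not true.
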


\begin{proof}
Suppose that $T$ is contractive. Then $T^{\ast \ast }$ is positive. Let $%
F\in \left[ 0,E_{1}\right] \cap A^{\sim \sim }$. In order to prove that $%
T^{\ast \ast }$ is contractive we shall show that $T^{\ast \ast }\left(
E_{1}\right) \leq E_{2}$. Due to\textit{\ }\cite{Ref9},
\begin{eqnarray*}
E_{1}\left( f\right) &=&\sup f\left( A\cap \left[ 0,E_{1}\right] \right) \\
E_{2}\left( g\right) &=&\sup g\left( B\cap \left[ 0,E_{2}\right] \right)
\end{eqnarray*}%
for all $f$ $\in A^{\sim }$ and $g\in B^{\sim }$. Let $a\in A\cap \left[
0,E_{1}\right] $ and $0\leq g\in B^{\sim }$. Since $T$ is contractive, $%
T\left( a\right) \in B\cap \left[ 0,E_{2}\right] $ so $g\left( T\left(
a\right) \right) \leq E_{2}\left( g\right) $ which implies that $T^{\ast
\ast }E_{1}\left( g\right) =E_{1}\left( g\circ T\right) \leq E_{2}\left(
g\right) $. Thus $T^{\ast \ast }\left( E_{1}\right) \leq E_{2}$. Conversely,
assume that $T^{\ast \ast }$ is contractive. Let $a\in A\cap \left[ 0,E_{1}%
\right] $ and $0\leq g\in B^{\sim }$. Then $\widehat{Ta}\left( g\right)
=g\left( Ta\right) \leq T^{\ast \ast }E_{1}\left( g\right) \leq E_{2}\left(
g\right) $ Thus $0\leq Ta=\widehat{Ta}\leq E_{2}$.
\end{proof}

\begin{corollary}
\label{c1}Let $A$ and $B$ be semiprime $f$-algebras with point separating
order duals such that their second order duals $A^{\sim \sim }$ and $B^{\sim
\sim }$ have unit elements $E_{1}$ and $E_{2}$ respectively. If $\
T:A\rightarrow B$ is a pre-Markov operator then $T$ is contractive.
\end{corollary}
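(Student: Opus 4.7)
The plan is to reduce Corollary \ref{c1} to Proposition \ref{p1} by observing that any Markov operator is automatically contractive, so that a pre-Markov $T$ has a contractive second adjoint, which by the proposition forces $T$ itself to be contractive.

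More concretely, I would proceed in three short steps. First, unpack the hypothesis: $T$ pre-Markov means, by definition, that $T^{\ast\ast}:A^{\sim\sim}\to B^{\sim\sim}$ is positive and satisfies $T^{\ast\ast}(E_{1})=E_{2}$. Second, show that $T^{\ast\ast}$ is contractive: if $F\in A^{\sim\sim}$ with $0\le F\le E_{1}$, then by positivity of $T^{\ast\ast}$ we get $0\le T^{\ast\ast}(F)\le T^{\ast\ast}(E_{1})=E_{2}$, which is exactly the contractivity condition at the level of the biduals. Third, invoke Proposition \ref{p1}: since $T^{\ast\ast}$ is contractive, so is $T$.

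I do not expect any genuine obstacle here; the result is essentially a one-line combination of the definitions with Proposition \ref{p1}. The only thing worth being careful about is the direction of implication in Proposition \ref{p1} being applied (we need the ``$T^{\ast\ast}$ contractive $\Rightarrow$ $T$ contractive'' half), and the trivial but necessary observation that $E_{2}\le E_{2}$ so that Markov operators do fall inside the class of contractive operators on $A^{\sim\sim}$.
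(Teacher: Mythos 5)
Your proposal is correct and follows the same route as the paper: the paper likewise observes that positivity of $T^{\ast\ast}$ together with $T^{\ast\ast}(E_{1})=E_{2}$ makes $T^{\ast\ast}$ contractive, and then applies Proposition \ref{p1}. Your version merely spells out the one-line contractivity check $0\le T^{\ast\ast}(F)\le T^{\ast\ast}(E_{1})=E_{2}$, which the paper leaves implicit.
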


\begin{proof}
Since $T^{\ast \ast }\left( E_{1}\right) =E_{2}$ and $T^{\ast \ast }$ is
positive, $T^{\ast \ast }$ is contractive. By Proposition \ref{p1} we have the
conclusion.
\end{proof}

For $a\in A$, the mapping $\pi _{a}:A\rightarrow OrthA,$ defined by $\pi
_{a}\left( b\right) =a.b$ is an orthomorphism on $A$. Since $A$ is a
Archimedean semiprime $f$-algebra, the mapping $\pi :A\rightarrow OrthA$,$\,$
defined by $\pi \left( a\right) =\pi _{a}$ is an injective $f$-algebra
homomorphism. Hence we shall identify $A$ with $\pi \left( A\right) $.

\section{Main Results}

\begin{theorem}
\label{t1}Let $A$ and $B$ be semiprime $f$-algebras with point separating
order duals such that their second order duals $A^{\sim \sim }$ and $B^{\sim
\sim }$ have unit elements $E_{1}$ and $E_{2}$ respectively. A pre-Markov
operator $T:A\rightarrow B$ is an algebra homomorphism if and only if its
second adjoint operator $T^{\ast \ast }$ is an algebra homomorphism.
\end{theorem}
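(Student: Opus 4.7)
The plan is to handle the two implications separately, since one is essentially a restriction argument and the other is a direct computation with the Arens product.

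For the ``if'' direction, the idea is that $T^{**}$ is a literal extension of $T$ under the canonical embeddings. A one-line unpacking of the definitions gives, for every $a\in A$ and $f\in B^{\sim}$,
\begin{equation*}
T^{**}(\widehat{a})(f) = \widehat{a}(f\circ T) = f(T(a)) = \widehat{T(a)}(f),
\end{equation*}
so $T^{**}\circ J_{1} = J_{2}\circ T$. Since $J_{1}$ is an algebra homomorphism and $J_{2}$ is an injective algebra homomorphism, multiplicativity of $T^{**}$ on $J_{1}(A)$ forces $\widehat{T(ab)} = T^{**}(\widehat{a}\cdot\widehat{b}) = T^{**}(\widehat{a})\cdot T^{**}(\widehat{b}) = \widehat{T(a)T(b)}$, and injectivity of $J_{2}$ yields $T(ab) = T(a)T(b)$.

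For the ``only if'' direction, assume $T$ is an algebra homomorphism and fix $F,G\in A^{\sim\sim}$. The plan is to test $T^{**}(F\cdot G)$ and $T^{**}(F)\cdot T^{**}(G)$ against an arbitrary $f\in B^{\sim}$ and unfold both sides via the Arens definition. One gets
\begin{equation*}
T^{**}(F\cdot G)(f) = (F\cdot G)(f\circ T) = F\bigl(G\cdot(f\circ T)\bigr),
\end{equation*}
\begin{equation*}
\bigl(T^{**}(F)\cdot T^{**}(G)\bigr)(f) = T^{**}(F)\bigl(T^{**}(G)\cdot f\bigr) = F\bigl((T^{**}(G)\cdot f)\circ T\bigr),
\end{equation*}
so the problem reduces to the identity $G\cdot(f\circ T) = (T^{**}(G)\cdot f)\circ T$ in $A^{\sim}$. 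Evaluating at $a\in A$ and expanding once more, this in turn reduces to proving $(f\circ T)\cdot a = (f\cdot T(a))\circ T$ in $A^{\sim}$; when evaluated at any $c\in A$, the left side equals $f(T(ac))$ and the right side equals $f(T(a)T(c))$. The assumption that $T$ is multiplicative closes the argument.

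The main obstacle is purely bookkeeping: the symbol $\cdot$ is overloaded for the three distinct bilinear actions $A^{\sim}\times A\to A^{\sim}$, $A^{\sim\sim}\times A^{\sim}\to A^{\sim}$, and $A^{\sim\sim}\times A^{\sim\sim}\to A^{\sim\sim}$ used to build the Arens product, and the reduction above involves four nested applications that must be unpacked in the correct order. No order-continuity, density, or approximation argument is needed. It is worth noting that the pre-Markov condition $T^{**}(E_{1})=E_{2}$ does not itself enter the multiplicativity computation; it only guarantees, together with the standing hypothesis that $A^{\sim\sim}$ and $B^{\sim\sim}$ are unital $f$-algebras, that the statement is formulated in a setting where the Arens product on both biduals is well behaved.
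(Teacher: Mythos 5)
Your proof is correct, and its computational core coincides with the paper's: your reduction to the identity $(f\circ T)\cdot a=(f\cdot Ta)\circ T$ is exactly the paper's equation (\ref{eq1}), and the next reduction $( (T^{\ast\ast}(G)\cdot f)\circ T)=G\cdot(f\circ T)$ is its equation (\ref{eq2}). The difference lies in how the forward direction is packaged. The paper never states multiplicativity of $T^{\ast\ast}$ as the target of the computation; it applies the chain of equalities only in the special case $F\cdot G=0$ to show that $T^{\ast\ast}$ preserves disjointness, concludes that $T^{\ast\ast}$ is a lattice homomorphism, and then invokes Huijsmans--de Pagter \cite{Ref8} together with the Markov property $T^{\ast\ast}(E_{1})=E_{2}$ to upgrade a Markov lattice homomorphism between unital $f$-algebras to an algebra homomorphism. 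You instead read off the full identity $T^{\ast\ast}(F\cdot G)=T^{\ast\ast}(F)\cdot T^{\ast\ast}(G)$ from the same unfolding, which removes both the external citation and any use of the unit elements; your remark that $T^{\ast\ast}(E_{1})=E_{2}$ plays no role in the multiplicativity computation is accurate, and indeed the paper's own displayed chain already contains your identity (its penultimate term $(F\cdot G)(f\circ T)$ is precisely $T^{\ast\ast}(F\cdot G)(f)$) without the authors exploiting it. What the paper's detour buys is Corollary \ref{c2} (that $T^{\ast\ast}$ is a lattice homomorphism whenever $T$ is a positive algebra homomorphism), which is reused in Theorem \ref{t2}; what your route buys is a proof of the equivalence that works for any order bounded algebra homomorphism, with positivity and the pre-Markov hypothesis relegated to the framing. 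The converse direction, via $T^{\ast\ast}\circ J_{1}=J_{2}\circ T$ and injectivity of $J_{2}$, is identical in both treatments.
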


\begin{proof}
Suppose that the pre-Markov operator $T$ is an algebra homomorphism. Since $%
T^{\ast \ast }$ is a Markov operator, due to \cite{Ref8}, it is
enough to show that it is a lattice homomorphism. Let $F,G\in A^{\sim \sim }$
such that $F\wedge G=0$. Since $A^{\sim \sim }$ and $B^{\sim \sim }$ are
semiprime $f$- algebras, $F\cdot G=0$. We shall show that $T^{\ast \ast
}\left( F\right) \cdot T^{\ast \ast }\left( G\right) =0$. Let $a,b\in A$ and
$f\in B^{\sim }$. Then it follows from the following equations%
\begin{eqnarray*}
\left( \left( f\cdot Ta\right) \circ T\right) \left( b\right) &=&\left(
f\cdot Ta\right) \left( Tb\right) =f\left( TaTb)=f(T(ab)\right) \\
&=&\left( f\circ T\right) \left( ab\right) =\left( (f\circ T)\cdot a\right)
\left( b\right)
\end{eqnarray*}%
that
\begin{equation}\label{eq1}
\left( f\cdot Ta\right) \circ T=\left( f\circ T\right) \cdot
a.
\end{equation}%
On the other hand, the following equations%
\begin{equation*}
\left( \left( G\circ T^{\ast })\cdot f\right) \circ T\right) \left( a\right)
=(\left( G\circ T^{\ast })\cdot f\right) \left( Ta\right) =(G\circ T^{\ast
})\left( f\cdot Ta\right) =G\left( (f\cdot Ta)\circ T\right)
\end{equation*}%
hold. Thus $\left( \left( G\circ T^{\ast })\cdot f\right) \circ T\right)
\left( a\right) =G\left( (f\cdot Ta)\circ T\right) $. From here, by setting
the equation (\ref{eq1}), we conclude that%
\begin{equation*}
\left( \left( G\circ T^{\ast })\cdot f\right) \circ T\right) \left( a\right)
=G\left( \left( f\circ T\right) \cdot a\right) =\left( G\cdot (f\circ
T)\right) \left( a\right)
\end{equation*}%
which implies%
\begin{equation}\label{eq2}
\left( \left( G\circ T^{\ast })\cdot f\right) \circ T\right) =\left( G\cdot
\left( f\circ T\right) \right).
\end{equation}%
Taking into account the equation (\ref{eq2}), we get
\begin{eqnarray*}
\left( T^{\ast \ast }\left( F\right) \cdot T^{\ast \ast }\left( G\right)
\right) \left( f\right) &=&T^{\ast \ast }\left( F\right) (\left( T^{\ast
\ast }\left( G\right) \cdot f\right) )=\left( F\circ T^{\ast }\right)
(\left( G\circ T^{\ast })\cdot f\right) \\
&=&F\left( \left( G\circ T^{\ast })\cdot f\right) \circ T\right) =F\left(
G\cdot \left( f\circ T\right) \right)
\end{eqnarray*}%
thus we have
\begin{equation*}
\left( T^{\ast \ast }\left( F\right) \cdot T^{\ast \ast }\left( G\right)
\right) \left( f\right) =\left( F\cdot G\right) \left( f\circ T\right) =0
\end{equation*}%
as desired. Conversely suppose that $T^{\ast \ast \text{ }}$is an algebra
homomorphism. Let $a,b\in A$. It follows from%
\begin{equation*}
T(ab)=\widehat{T(ab)}=T^{\ast \ast }\left( \widehat{ab}\right) =T^{\ast \ast
}\left( \widehat{a}\cdot \widehat{b}\right) =T^{\ast \ast }\left( \widehat{a}%
\right) \cdot T^{\ast \ast }\left( \widehat{b}\right) =\widehat{Ta}\cdot
\widehat{Tb}=Ta.Tb
\end{equation*}%
that $T$ is an algebra homomorphism.
\end{proof}

In the proof of Theorem \ref{t1} we proved the following corollary as well;

\begin{corollary}
\label{c2}Let $A$, $B$ and their second order duals $A^{\sim \sim }$ and $%
B^{\sim \sim }$ be semiprime $f$-algebras and $T:A\rightarrow B$ a positive
algebra homomorphism. Then $T^{\ast \ast }$ is a lattice homomorphism.
\end{corollary}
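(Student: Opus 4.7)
The plan is to observe that the algebraic core of the forward direction of Theorem~\ref{t1}---the derivation that $T^{\ast\ast}(F) \cdot T^{\ast\ast}(G) = 0$ whenever $F \cdot G = 0$---never invokes the pre-Markov hypothesis $T^{\ast\ast}(E_{1}) = E_{2}$, and in fact does not even require $A^{\sim\sim}$ and $B^{\sim\sim}$ to have unit elements. The only property of $T$ used to obtain the identities (\ref{eq1}) and (\ref{eq2}), and hence the key equality $(T^{\ast\ast}(F) \cdot T^{\ast\ast}(G))(f) = (F \cdot G)(f \circ T)$, is the multiplicativity $T(ab) = Ta \cdot Tb$. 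So the first step of the proof is simply to transcribe those computations verbatim, noting that they remain valid under the present hypotheses.

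The second step combines three standard facts: first, since $T$ is positive, $T^{\ast\ast}$ is positive, as observed in the remark preceding Proposition~\ref{p1}; second, in any semiprime $f$-algebra two positive elements $U, V$ are disjoint if and only if $U \cdot V = 0$; third, a positive operator between Archimedean vector lattices that preserves disjointness is a lattice homomorphism. Given $F, G \in A^{\sim\sim}$ with $F \wedge G = 0$, the second fact applied in $A^{\sim\sim}$ yields $F \cdot G = 0$, so the identity from the first step forces $(T^{\ast\ast}(F) \cdot T^{\ast\ast}(G))(f) = 0$ for every $f \in B^{\sim}$; hence $T^{\ast\ast}(F) \cdot T^{\ast\ast}(G) = 0$ in $B^{\sim\sim}$. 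Applying the second fact now in $B^{\sim\sim}$ gives $T^{\ast\ast}(F) \wedge T^{\ast\ast}(G) = 0$, and the third fact closes the argument.

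The only real obstacle is the claim that the pre-Markov assumption is cosmetic for the algebraic identity; concretely, one must reread the computation of Theorem~\ref{t1} and confirm that no step uses $T^{\ast\ast}(E_{1}) = E_{2}$, contractivity, relative uniform completeness, or a unit element in $A$ or $B$. Since each line just rewrites an Arens product or a dual pairing via $T(ab) = Ta \cdot Tb$, this verification is mechanical, and the conclusion rests entirely on the standing hypothesis that $A$, $B$, $A^{\sim\sim}$, and $B^{\sim\sim}$ are semiprime $f$-algebras.
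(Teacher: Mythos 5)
Your proposal is correct and is essentially the paper's own argument: the authors introduce Corollary~\ref{c2} with the remark that it was already established inside the proof of Theorem~\ref{t1}, precisely because the computation yielding $\left( T^{\ast\ast}(F)\cdot T^{\ast\ast}(G)\right)(f)=\left( F\cdot G\right)(f\circ T)$ uses only multiplicativity of $T$, after which positivity of $T^{\ast\ast}$ and semiprimeness of the biduals convert disjointness preservation into the lattice homomorphism property. Your write-up just makes explicit the bookkeeping the paper leaves implicit.
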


\begin{theorem}
\label{t2}Let $A$ and $B$ be semiprime $f$-algebras with point separating
order duals and $T:A\rightarrow B$ a positive linear operator. If the second
order duals $A^{\sim \sim }$ and $B^{\sim \sim }$ have unit elements and $T$
is an algebra homomorphism, then $T$ is an extreme point of the contractive
operators from $A$ to $B$.
\end{theorem}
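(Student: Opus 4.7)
The plan is to use the algebra-homomorphism identity $T(a^{2})=T(a)^{2}$ together with a Kadison--Schwarz inequality for contractive positive operators to force any convex decomposition of $T$ into contractive summands to be trivial. First I would verify that $T$ itself is contractive: for $a\in A$ with $0\le a\le E_{1}$ in $A^{\sim \sim }$, the product $a(E_{1}-a)\ge 0$ gives $a^{2}\le a$, so the algebra-homomorphism property yields $T(a)^{2}=T(a^{2})\le T(a)$, which forces $T(a)\le E_{2}$ in $B^{\sim \sim }$ via standard $f$-algebra arguments (equivalently, via a Yosida-type representation of $B$).

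Suppose then that $T=\lambda S+(1-\lambda )R$ with $S,R:A\rightarrow B$ contractive and $\lambda \in (0,1)$; the aim is to show $S=R=T$. For $a\in A_{+}$, combining $T(a^{2})=T(a)^{2}$ with the convex decomposition of $T$ and the elementary identity $(\lambda u+(1-\lambda )v)^{2}=\lambda u^{2}+(1-\lambda )v^{2}-\lambda (1-\lambda )(u-v)^{2}$ leads to the key equality
\begin{equation*}
\lambda (1-\lambda )\bigl(S(a)-R(a)\bigr)^{2}=\lambda \bigl[S(a)^{2}-S(a^{2})\bigr]+(1-\lambda )\bigl[R(a)^{2}-R(a^{2})\bigr],
\end{equation*}
whose left-hand side is nonnegative since squares are positive in the $f$-algebra $B$.

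The main technical hurdle, which I expect to be the crux of the proof, is establishing a Kadison--Schwarz inequality $\Psi (a)^{2}\le \Psi (a^{2})$ for any contractive positive $\Psi :A\rightarrow B$ and $a\in A_{+}$. I would derive it from the functional representation of the semiprime $f$-algebra $B$: for each multiplicative point evaluation $\chi $ on $B$ with $\chi (E_{2})=1$, the composition $\chi \circ \Psi $ is a positive linear functional on $A$, and the element $E_{1}\in A^{\sim \sim }$ pairs with it as $E_{1}(\chi \circ \Psi )=\sup_{a\in A\cap [0,E_{1}]}\chi (\Psi (a))\le 1$ by the formula used in Proposition \ref{p1} combined with contractivity. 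The classical Cauchy--Schwarz inequality for the positive functional $F\mapsto F(\chi \circ \Psi )$ on the commutative unital algebra $A^{\sim \sim }$ then yields $\chi (\Psi (a))^{2}\le \chi (\Psi (a^{2}))\cdot E_{1}(\chi \circ \Psi )\le \chi (\Psi (a^{2}))$, and multiplicativity of $\chi $ rewrites the left-hand side as $\chi (\Psi (a)^{2})$; since sufficiently many such $\chi $ separate points of $B$, we conclude $\Psi (a)^{2}\le \Psi (a^{2})$ in $B$.

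Applying Kadison--Schwarz to $S$ and $R$ makes the right-hand side of the displayed equation nonpositive; combined with the nonnegative left-hand side, both must vanish. Hence $(S(a)-R(a))^{2}=0$, and semiprimeness of $B$ forces $S(a)=R(a)=T(a)$. Since $A$ is a Riesz space, any $a\in A$ decomposes as $a^{+}-a^{-}$ with $a^{\pm }\in A_{+}$; linearity of $S,R,T$ extends the equality to all of $A$, giving $S=R=T$ and proving that $T$ is an extreme point of the contractive operators.
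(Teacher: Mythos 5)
Your overall strategy---combine the convexity identity $(\lambda u+(1-\lambda)v)^{2}=\lambda u^{2}+(1-\lambda)v^{2}-\lambda(1-\lambda)(u-v)^{2}$ with a Schwarz-type inequality $\Psi(a)^{2}\leq\Psi(a^{2})$ for contractive $\Psi$, then use semiprimeness of $B$ to kill $(S(a)-R(a))^{2}$---is a legitimate and classical route, and it is genuinely different from the paper's proof, which never touches $A$ and $B$ directly: the paper passes to $2T^{\ast\ast}=T_{1}^{\ast\ast}+T_{2}^{\ast\ast}$, uses Proposition \ref{p1} and Corollary \ref{c2} to see that $T^{\ast\ast}$ is a contractive lattice homomorphism between the \emph{unital, Dedekind complete} algebras $A^{\sim\sim}$ and $B^{\sim\sim}$, and then invokes the extreme-point theorem of \cite{Ref3} there. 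That detour through the biduals is not cosmetic; it is precisely what supplies the completeness and unitality needed to make a Schwarz-type inequality available.

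The genuine gap in your argument is the step where you claim that multiplicative point evaluations $\chi$ on $B$ with $\chi(E_{2})=1$ separate the points of $B$. Nothing in the hypotheses of Theorem \ref{t2} gives you this: you are told only that $B$ is an Archimedean semiprime $f$-algebra whose \emph{order dual} separates points and whose \emph{second} order dual is unital. Multiplicative (equivalently, by Huijsmans--de Pagter, real-valued lattice) homomorphisms form a far smaller class than $B^{\sim}$, and an Archimedean Riesz space with separating order dual may admit too few of them---or none---so the passage from ``$\chi(\Psi(a)^{2})\leq\chi(\Psi(a^{2}))$ for all such $\chi$'' to ``$\Psi(a)^{2}\leq\Psi(a^{2})$ in $B$'' is unjustified. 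Note also that your Cauchy--Schwarz step genuinely needs multiplicativity of $\chi$ (to rewrite $\chi(\Psi(a))^{2}$ as $\chi(\Psi(a)^{2})$); with merely positive $g\in B^{\sim}$, which do separate points, you only control $g(\Psi(a))^{2}$, which is useless here. The inequality you want is essentially Lemma 3.1 of \cite{Ref3} (compare Proposition \ref{p6} and Corollary \ref{c4} in this paper), but its known proofs require a relatively uniformly complete codomain satisfying the Stone condition---which is exactly why the paper works in $B^{\sim\sim}$ rather than in $B$. To repair your proof you would either have to prove the Kadison--Schwarz inequality directly in $B$ under the stated hypotheses (not obviously possible), or lift your decomposition to the second adjoints via Proposition \ref{p1} and run your computation there, at which point you have essentially rejoined the paper's argument. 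Your opening step (contractivity of $T$ from $a^{2}\leq a\Rightarrow T(a)^{2}\leq T(a)$) is fine in substance but also deserves a line of justification that $0\leq b$, $b^{2}\leq b$ forces $b\leq E_{2}$; the paper simply cites Triki for this.
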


\begin{proof}
$~$Suppose that $T$ is a positive algebra homomorphism. Then due to \cite[Theorem~4.3] {Ref14}, $T$ is a contractive operator. Let $2T=T_{1}+T_{2}$ for
some contractive operators $T_{1},T_{2}$ from $A$ to $B$. In this case, $%
2T^{\ast \ast }=T_{1}^{\ast \ast }+T_{2}^{\ast \ast }$. By Proposition \ref{p1}, $%
T^{\ast \ast }$, $T_{1}^{\ast \ast }$ and $T_{2}^{\ast \ast }$ are
contractive and by Corollary \ref{c2}, $T^{\ast \ast }$ is a lattice
homomorphism. Taking into account \cite[Theorem~3.3] {Ref3}, we
derive that $T^{\ast \ast }$ is an extreme point in the collection of all
contractive operators from $A^{\sim \sim }$ to $B^{\sim \sim }$. Thus $%
T^{\ast \ast }=T_{1}^{\ast \ast }=T_{2}^{\ast \ast }$ and therefore $%
T=T_{1}=T_{2}$.
\end{proof}

At this point, we recall the definition of uniform completion of an
Archimedean Riesz space. If $A$ is an Archimedean Riesz space and $\widehat{A%
}$ is the Dedekind completion of $A$, then $\overline{A},$ the closure of $A$
in $\widehat{A}$ with respect to the relatively uniform topology \cite{Ref11}\textrm{, }is so called that relatively uniformly completion of $A$
\cite{Ref12}\textrm{. }If $A$ is an semiprime $f$-algebra then the
multiplication in $A$ can be extended in a unique way into a lattice ordered
algebra multiplication on $\overline{A}$ such that $A$ becomes a sub-algebra
of $\overline{A}$ and $\overline{A}$ is an relatively uniformly complete
semiprime $f$-algebra. In \cite[Theorem~3.4] {Ref14} it is shown that a positive
operator $T$ from a Riesz space $A$ to a uniformly complete space $B$, has a
unique positive linear extension $\ \overline{T}:\overline{A}\rightarrow B$
to the relatively uniformly completion $\overline{A}$ of $A$, defined by,

\begin{equation*}
\overline{T}\left( x\right) =\sup \left\{ T(a):0\leq a\leq x\right\}
\end{equation*}%
for $0\leq $ $x\in \overline{A}$. We also recall that $\overline{A}$
satisfies the Stone condition (that is, $x\wedge nI^{\ast }\in \overline{A}$%
, for all $x\in \overline{A}$, where $I$ denotes the identity on $A$ of $%
OrthA$) due to Theorem $2.5$ in \cite{Ref7}. For the completeness we give
the easy proof of the following proposition.

\begin{proposition}\label{p2}
Let $A$ and $B$ be Archimedean semiprime $f$-algebras such that $%
B $ is relatively uniformly complete. In this case, $T:A\rightarrow B$ is
contractive if and only if $\ \overline{T}$ is contractive.
\end{proposition}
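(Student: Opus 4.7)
The plan is to prove both directions by combining the Kantorovich-type extension formula $\overline{T}(x)=\sup\{T(a):a\in A,\,0\leq a\leq x\}$ (valid for $0\leq x\in\overline{A}$) with a short identification lemma that compares the two intervals $A\cap[0,I_A]$ and $\overline{A}\cap[0,I_{\overline{A}}]$ on elements of $A$. The claim I would establish first is that for every $a\in A$,
\[
0\leq a\leq I_A \text{ in } Orth(A) \iff 0\leq a\leq I_{\overline{A}} \text{ in } Orth(\overline{A}).
\]
Positivity transfers in either direction since $A$ is an $f$-subalgebra of $\overline{A}$. The restriction direction $a\leq I_{\overline{A}}\Rightarrow a\leq I_A$ is immediate from $A_+\subseteq\overline{A}_+$. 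The reverse direction $a\leq I_A\Rightarrow a\leq I_{\overline{A}}$ follows by a density argument: any $0\leq b\in\overline{A}$ is a relatively uniform limit of a sequence $(b_n)$ in $A_+$ (obtained by taking positive parts of an approximating sequence in $A$); since multiplication by the fixed positive element $a$ preserves relatively uniform convergence, the inequalities $ab_n\leq b_n$ pass to the limit, yielding $ab\leq b$.

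Assuming $T$ is contractive, take $x\in\overline{A}$ with $0\leq x\leq I_{\overline{A}}$. For any $a\in A$ with $0\leq a\leq x$, one has $0\leq a\leq I_{\overline{A}}$, hence $0\leq a\leq I_A$ by the lemma, and contractivity of $T$ gives $0\leq Ta\leq I_B$. Since $\overline{T}(x)$ is defined as the supremum (taken in $B$) of such $Ta$'s, it follows that $0\leq \overline{T}(x)\leq I_B$, and $\overline{T}$ is contractive.

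Conversely, assume $\overline{T}$ is contractive and let $a\in A$ with $0\leq a\leq I_A$. By the lemma, $a\in\overline{A}\cap[0,I_{\overline{A}}]$, so $Ta=\overline{T}(a)\in B\cap[0,I_B]$, proving $T$ is contractive.

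The main obstacle, mild as it is, is the nontrivial direction of the identification lemma, which depends on the continuity of multiplication in $\overline{A}$ with respect to relatively uniform convergence. Once that point is granted, the rest is a direct bookkeeping exercise built on the extension formula.
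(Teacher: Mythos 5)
Your argument is correct and follows essentially the same route as the paper's proof: the forward direction reads off the bound $\overline{T}(x)\leq I_{B}$ from the extension formula $\overline{T}(x)=\sup\{T(a):a\in A,\ 0\leq a\leq x\}$, and the converse is immediate because $\overline{T}$ restricts to $T$ on $A$. The only difference is that you isolate and justify the identification $A\cap[0,I_{A}]=A\cap[0,I_{\overline{A}}]$ (via relatively uniform continuity of multiplication), a point the paper uses implicitly without comment; this is a welcome clarification rather than a change of method.
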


\begin{proof}
Suppose that $T$ is contractive. Let $x\in \overline{A}\cap \left[ 0,%
\overline{I}\right] $, here $\overline{I}$ is the unique extension to $%
\overline{A}$ of the identity mapping $I:A\rightarrow A$. Since $T$ is
contractive, $a\in A\cap \left[ 0,x\right] $ implies that $I$ is an upper
bound for the set $\left \{ T\left( a\right) :a\leq x,a\in A\right \} $, so $%
\overline{T}\left( x\right) \leq I$. Therefore $\overline{T}$ \ is
contractive. The converse implication is trivial, since $\overline{T}$ is
the extension of $T$, we get $0\leq \overline{T}\left( a\right) =T\left(
a\right) \leq I$ whenever $a\in A\cap \left[ 0,I\right] $.
\end{proof}

\begin{proposition}
\label{p3}Let $A$ and $B$ be Archimedean semiprime $f$- algebras such that $%
B $ is relatively uniformly complete and let $T:A\rightarrow B$ be a
contractive operator. Then $T$ is an extreme point in the collection of all
contractive operators from $A$ to $B$ if and only if $\ \overline{T}$ is an
extreme point of all contractive operators from $\overline{A}$ to $B$.
\end{proposition}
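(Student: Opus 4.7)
The plan is to exploit two facts already in play: Proposition \ref{p2}, which transports contractivity between $T$ and $\overline{T}$, and the uniqueness of the positive linear extension $\overline{T}$ of $T$ cited from \cite{Ref14}. Uniqueness is crucial because it makes the correspondence $T \leftrightarrow \overline{T}$ well behaved with respect to sums: if $T_{1},T_{2}:A\to B$ are positive and linear, then $\overline{T_{1}}+\overline{T_{2}}$ is a positive linear extension of $T_{1}+T_{2}$, hence equals $\overline{T_{1}+T_{2}}$, and similarly $2\overline{T}=\overline{2T}$. I will use this additive compatibility freely in both directions.

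For the forward implication, assume $T$ is an extreme point in the contractive operators $A\to B$. Suppose
\begin{equation*}
2\,\overline{T}=S_{1}+S_{2}
\end{equation*}
for contractive operators $S_{1},S_{2}:\overline{A}\to B$. First I would check that each restriction $S_{i}|_{A}:A\to B$ is contractive: if $a\in A\cap[0,I_{A}]$ then, viewing $a$ in $\overline{A}$, we have $0\leq a\leq \overline{I_{A}}$, so $0\leq S_{i}(a)\leq I_{B}$. Hence $2T=S_{1}|_{A}+S_{2}|_{A}$ is a decomposition of $T$ into contractive operators on $A$, and extremality gives $T=S_{1}|_{A}=S_{2}|_{A}$. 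But then $S_{1}$ and $S_{2}$ are two positive linear extensions of $T$ to $\overline{A}$, and by the uniqueness of the extension $\overline{T}$ we conclude $S_{1}=S_{2}=\overline{T}$, proving that $\overline{T}$ is extreme.

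For the converse, assume $\overline{T}$ is an extreme point in the contractive operators $\overline{A}\to B$. Given a decomposition $2T=T_{1}+T_{2}$ with $T_{1},T_{2}:A\to B$ contractive, Proposition \ref{p2} supplies the contractive extensions $\overline{T_{1}},\overline{T_{2}}:\overline{A}\to B$. Using the additive compatibility noted above,
\begin{equation*}
2\,\overline{T}=\overline{2T}=\overline{T_{1}+T_{2}}=\overline{T_{1}}+\overline{T_{2}},
\end{equation*}
so extremality of $\overline{T}$ forces $\overline{T}=\overline{T_{1}}=\overline{T_{2}}$. Restricting to $A$ gives $T=T_{1}=T_{2}$, which is what we want.

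The argument has no real obstacle beyond the two bookkeeping points above, namely that a contractive operator on $\overline{A}$ restricts to a contractive operator on $A$ (an immediate consequence of the inclusion $A\cap[0,I_{A}]\subseteq\overline{A}\cap[0,\overline{I_{A}}]$) and that the extension is additive (an immediate consequence of its uniqueness among positive linear extensions). Both are essentially routine once the extension formula from \cite[Theorem~3.4]{Ref14} is in hand, which is why I would spend most of the write-up making sure the restriction/extension loop $T\mapsto\overline{T}\mapsto\overline{T}|_{A}=T$ is stated cleanly before invoking extremality in each direction.
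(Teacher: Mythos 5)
Your proof is correct, but it takes a genuinely different route from the paper's. You work directly with the midpoint definition of an extreme point: a decomposition $2\overline{T}=S_{1}+S_{2}$ is pulled down to $A$ by restriction, and a decomposition $2T=T_{1}+T_{2}$ is pushed up to $\overline{A}$ using the additivity of the extension map, which you correctly derive from the uniqueness clause of \cite[Theorem~3.4]{Ref14} (the sum of two positive linear extensions is a positive linear extension of the sum). The paper instead tests extremality through a domination criterion: it fixes $\varepsilon>0$ and a contractive $S$ with $\varepsilon T-S\geq 0$ (respectively $\varepsilon\overline{T}-S\geq 0$) and argues that $T=S$, transporting the inequality to the completion via relative uniform continuity of positive operators and pulling it back via restriction together with the same uniqueness of the extension. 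What your version buys is self-containment: it needs only Proposition~\ref{p2} and the uniqueness of $\overline{T}$, and it avoids the $\varepsilon$-criterion, whose equivalence with extremality the paper does not establish (it is presumably meant in the spirit of \cite{Ref3}); the paper's version makes the relative uniform continuity mechanism explicit, which is the same mechanism underlying the uniqueness you invoke. The two bookkeeping points you isolate --- that $A\cap[0,I_{A}]\subseteq\overline{A}\cap[0,\overline{I_{A}}]$ so restrictions of contractive operators are contractive, and that $S_{1},S_{2}$ in your forward direction are positive linear extensions of $T$ and hence both equal $\overline{T}$ --- are exactly the facts the paper also uses implicitly in its own converse step, and both are sound here.
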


\begin{proof}
Suppose that $\overline{T}$ is an extreme point in the set of all
contractive operators from $\overline{A}$ to $B$. We shall show that for
arbitrary $\varepsilon >0$ and contractive operator $S$ from $A$ to $B$
satisfying $\varepsilon T-S\geq 0$ implies that $T=S$. Let $0\leq x\in
\overline{A}$. Then there exists a positive sequence $\left( a_{n}\right)
_{n}$ in $A_{\text{ }}$ converging relatively uniformly to $x$. Since $%
\overline{T}$ and $\overline{S}$ are relatively uniformly continuos, the
sequence $\varepsilon \overline{T}\left( a_{n}\right) $ $-\overline{S}\left(
a_{n}\right) =\varepsilon T\left( a_{n}\right) -S\left( a_{n}\right) $
converges to $\varepsilon \overline{T}\left( x\right) $ $-\overline{S}\left(
x\right) $. Therefore, since $\left( a_{n}\right) _{n}$ is positive sequence
and $\varepsilon T-S\geq 0$, we get $\varepsilon \overline{T}$ $-\overline{S}%
\geq 0$. Since $\overline{T}$ is an extreme point, we have $\overline{T}=%
\overline{S}$, so that $T=S$. Conversely assume that $T$ is an extreme point
in the set of all contractive operators from $A$ to $B$. Let $\varepsilon >0$
be any number and let $S$ be any contractive operator from $\overline{A}$ to $B$
satisfying $\varepsilon \overline{T}$ $-S\geq 0$. Let $U$ be the restriction
of $S$ to $A$. Since $S$ is contractive, by Proposition \ref{p2}, $S\mid _{A}=U$
is contractive and by the uniquness of the extension, we infer that $S=%
\overline{U}$. Hence $\left( \varepsilon \overline{T}-S\right) \mid
_{A}=\varepsilon T-U\geq 0$. Thus $\overline{T}=S$, which shows that $%
\overline{T}$ is an extreme point.
\end{proof}

After proving the following Propositions \ref{p4} and \ref{p5} for order bounded
operators with the relatively uniformly complete region, we remarked that
both were proved in \cite{Ref10} for the positive operators with
Dedekind complete region. They might be regarded as the alternate proofs.

\begin{proposition}
\label{p4}Let $T:A\rightarrow B$ be an order bounded operator where $A$ and $%
B$ are Archimedean $f$-algebras and $B$ is, in addition, relatively
uniformly complete. Then $T$ is an algebra homomorphism\textbf{\ }iff $\
\overline{T}$ is an algebra homomorphism.
\end{proposition}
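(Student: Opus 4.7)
The plan is to treat the two implications separately. The backward direction is immediate: if $\overline{T}$ is an algebra homomorphism, then since $A$ is a subalgebra of $\overline{A}$ and $\overline{T}$ restricts to $T$ on $A$, we have $T(ab)=\overline{T}(ab)=\overline{T}(a)\overline{T}(b)=T(a)T(b)$ for all $a,b\in A$.

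For the forward direction, assume $T$ is an order bounded algebra homomorphism. I first want to see that $T$ admits a natural continuous extension. Since $T$ is order bounded, for every $0\le u\in A$ there exists $0\le v\in B$ with $T([-u,u])\subseteq[-v,v]$; by linearity, $|T(a)|\le\lambda v$ whenever $|a|\le\lambda u$ with $\lambda\ge 0$. Consequently, if $(a_n)\subset A$ satisfies $|a_n-a|\le\varepsilon_n u$ with $\varepsilon_n\downarrow 0$, then $|T(a_n)-T(a)|\le\varepsilon_n v$, so $T$ is relatively uniformly continuous. Combined with the relative uniform completeness of $B$, a standard extension-by-continuity argument produces a unique relatively uniformly continuous linear extension $\overline{T}:\overline{A}\to B$ of $T$, and this is the object whose multiplicativity must be checked.

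The key analytic fact I will use is that in any Archimedean $f$-algebra the product is jointly continuous for relative uniform convergence: from $|xy|\le|x||y|$ and the identity $a_n b_n-ab=a_n(b_n-b)+(a_n-a)b$ one obtains a standard bound yielding $a_n b_n\to ab$ relatively uniformly whenever $a_n\to a$ and $b_n\to b$ relatively uniformly. Now fix $0\le x,y\in\overline{A}$ and choose positive sequences $(a_n),(b_n)\subset A$ with $a_n\to x$ and $b_n\to y$ relatively uniformly; such sequences exist by the very definition of $\overline{A}$. Joint continuity of multiplication in $\overline{A}$ gives $a_n b_n\to xy$ relatively uniformly, so continuity of $\overline{T}$ gives $\overline{T}(a_n b_n)\to\overline{T}(xy)$. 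But $a_n b_n\in A$, so $\overline{T}(a_n b_n)=T(a_n b_n)=T(a_n)T(b_n)=\overline{T}(a_n)\overline{T}(b_n)$, and the latter converges to $\overline{T}(x)\overline{T}(y)$ by joint continuity of multiplication in $B$. Thus $\overline{T}(xy)=\overline{T}(x)\overline{T}(y)$ for all positive $x,y\in\overline{A}$, and bilinearity of the product together with linearity of $\overline{T}$ extends this identity to all of $\overline{A}$.

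The main obstacle I anticipate is a clean treatment of the joint continuity of multiplication and the bookkeeping of regulators in the estimates above; once this standard fact about Archimedean $f$-algebras is in hand, the argument collapses to a diagram chase with the approximating sequences.
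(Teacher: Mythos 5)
Your argument is correct, but it runs on different machinery than the paper's. The paper fixes $0\le x,y\in\overline{A}$ and uses Triki's supremum representation $xy=\sup\{R_y(a):0\le a\le x,\ a\in A\}$ with $R_y(a)=\sup\{ab:0\le b\le y,\ b\in A\}$, then pushes $\overline{T}$ through these two suprema, using the formula $\overline{T}(x)=\sup\{T(a):0\le a\le x\}$ and the identity $\sup\{T(a)T(b):b\}=T(a)\sup\{T(b):b\}$; note that this last step silently uses $T(a)\ge 0$, i.e.\ positivity of $T$, even though the proposition is stated for order bounded operators. You instead extract relative uniform continuity of $T$ from order boundedness, extend by continuity, and pass to the limit along $a_nb_n\to xy$ using joint relative uniform continuity of the $f$-algebra multiplication on both sides. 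This buys you something real: your proof works verbatim for an order bounded, not necessarily positive, $T$, which is exactly the stated hypothesis, and it also supplies a construction of $\overline{T}$ in that generality (the paper only defines $\overline{T}$ via the supremum formula for positive operators). The price is that you must carry the regulator bookkeeping yourself; the only point where I would ask you to be explicit is in the extension-by-continuity step, where you need the approximating sequences from $A$ to admit regulators that are dominated by elements of $A$ (otherwise the estimate $|T(a_n)-T(a_m)|\le\varepsilon_{n,m}v$ has nothing to feed on). That is a standard fact about the relatively uniform completion, and the paper itself uses the same density and continuity assertions without proof in Proposition \ref{p3}, so it is a presentational remark rather than a gap.
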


\begin{proof}
Suppose that $T:A\rightarrow B$ is an algebra homomorphism and $x,y$ be
positive elements in $\overline{A}$. By \cite{Ref14}, since%
\begin{equation*}
xy=\sup \left\{ R_{y}\left( a\right) :0\leq a\leq x,a\in A\right\}
\end{equation*}%
and
\begin{equation*}
R_{y}\left( a\right) =\sup \left\{ ab:0\leq b\leq y,b\in A\right\} \text{.}
\end{equation*}%
Now as $\overline{T}$ is relatively uniformly continuous, we get,
\begin{align*}
\overline{T}\left( R_{y}\left( a\right) \right) & =\sup \left\{ \overline{T}%
\left( ab\right) =T\left( ab\right) =T\left( a\right) T\left( b\right)
:0\leq b\leq y,b\in A\right\} \\
& =T\left( a\right) \sup \left\{ T\left( b\right) :0\leq b\leq y,b\in
A\right\} \\
& =T\left( a\right) \overline{T}\left( y\right)
\end{align*}%
and then
\begin{align*}
\overline{T}\left( xy\right) & =\sup \left\{ \overline{T}\left( R_{y}\left(
a\right) \right) :0\leq a\leq x,a\in A\right\} \\
& =\sup \left\{ T\left( a\right) \overline{T}\left( y\right) :0\leq a\leq
x,a\in A\right\} \\
& =\overline{T}\left( y\right) \sup \left\{ T\left( a\right) :0\leq a\leq
x,a\in A\right\} \\
& =\overline{T}\left( x\right) \overline{T}\left( y\right)
\end{align*}%
Hence $\overline{T}$ is an algebra homomorphism. The converse is trivial.
\end{proof}

In \cite{Ref8}, both were proved that an algebra homomorphism $%
T:A\rightarrow B$ need not be a lattice homomorphism if the domain $A$ is
not relatively uniformly complete $\left( \text{Example }5.2\right) $ and an
order bounded algebra homomorphism $T:A\rightarrow B$ is a lattice
homomorphism whenever the domain $A$ has a unit element. We remarked that
Proposition \ref{p4} yields that the second result also holds for an order
bounded algebra homomorphism without unitary domain but the region is
relatively uniformly complete.

\begin{corollary}
\label{c3}Let $A$ be an Archimedean semiprime $f$-algebra and $B$ a
relatively uniformly complete Archimedean $f$-algebra. Then any order
bounded algebra homomorphism $T:A\rightarrow B$ is a lattice homomorphism.
\end{corollary}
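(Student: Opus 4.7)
The strategy is to lift the problem to the relatively uniformly complete Archimedean $f$-algebra $\overline{A}$, where the second Huijsmans--de Pagter theorem (item (2) of the introduction, from \cite{Ref8}) applies directly.

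First, I would invoke Proposition \ref{p4}: since $T:A\rightarrow B$ is an order bounded algebra homomorphism and $B$ is relatively uniformly complete, the canonical extension $\overline{T}:\overline{A}\rightarrow B$ is again an algebra homomorphism. By construction, $\overline{A}$ is itself a relatively uniformly complete Archimedean semiprime $f$-algebra, and $A$ embeds in $\overline{A}$ as a Riesz sub-algebra on which $\overline{T}$ agrees with $T$.

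Next, I would apply item (2) from the introduction (the Huijsmans--de Pagter theorem): any algebra homomorphism from a relatively uniformly complete Archimedean $f$-algebra into an Archimedean $f$-algebra is automatically a lattice homomorphism. Applying this with domain $\overline{A}$ gives that $\overline{T}$ is a lattice homomorphism.

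Finally, restricting a lattice homomorphism to a Riesz subspace yields a lattice homomorphism, so $T=\overline{T}|_A$ is a lattice homomorphism, which is the desired conclusion. The main obstacle is really only a bookkeeping one, namely verifying that Proposition \ref{p4} supplies a bona fide extension $\overline{T}$ for an order bounded (rather than merely positive) algebra homomorphism; once this is in hand, the corollary reduces to a one-line application of the cited result and the fact that $A$ sits as a Riesz sub-algebra of $\overline{A}$. In particular, the hypothesis that the \emph{range} $B$ (rather than the domain $A$) is relatively uniformly complete is used precisely to legitimize this passage to $\overline{A}$, which was the novelty emphasized in the remark preceding the statement.
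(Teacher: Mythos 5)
Your argument is exactly the paper's own proof: apply Proposition \ref{p4} to obtain that $\overline{T}$ is an algebra homomorphism, invoke the Huijsmans--de Pagter theorem on the relatively uniformly complete domain $\overline{A}$ to conclude $\overline{T}$ is a lattice homomorphism, and restrict to $A$. No gaps; this matches the paper's reasoning step for step.
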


\begin{proof}
By Proposition \ref{p4}, $\overline{T}$ is an algebra homomorphism and since $%
\overline{A}$ is relatively uniformly complete, $\overline{T}$ \ is a
lattice homomorphism \cite{Ref8}\textrm{.} Thus $T$ is a lattice
homomorphism.
\end{proof}

\begin{proposition}
\label{p5}Let $A$ be an Archimedean $f$-algebra and let $B$ be a relatively
uniformly complete semiprime $f$- algebra. Then the operator $T:A\rightarrow
B$ is a lattice homomorphism\textbf{\ }iff $\ \overline{T}$ is a lattice
homomorphism.
\end{proposition}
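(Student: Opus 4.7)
The plan is to handle the ``if'' direction by trivial restriction (if $\overline{T}$ preserves $\wedge$ on $\overline{A}$, then $T = \overline{T}|_A$ does as well) and to prove the ``only if'' direction by a relative uniform approximation argument. For the converse, since any lattice homomorphism is positive (from $T(a) = T(a \vee 0) = T(a) \vee 0 \geq 0$), Theorem~3.4 of \cite{Ref14} produces a positive linear extension $\overline{T}:\overline{A}\to B$. Throughout the argument I would use the basic observation that every positive linear operator is relatively uniformly continuous: $|a_n - a| \leq \varepsilon u$ implies $|\overline{T}(a_n) - \overline{T}(a)| \leq \varepsilon\,\overline{T}(u)$.

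The main step is to prove $\overline{T}(x \wedge y) = \overline{T}(x) \wedge \overline{T}(y)$ for positive $x, y \in \overline{A}$. Given such $x$ and $y$, I would choose sequences $(a_n), (b_n) \subset A^{+}$ with $a_n \to x$ and $b_n \to y$ relatively uniformly. The standard Archimedean inequality $|a \wedge b - c \wedge d| \leq |a - c| + |b - d|$ then forces $a_n \wedge b_n \to x \wedge y$ relatively uniformly. Since $T$ is a lattice homomorphism on $A$, one has $T(a_n \wedge b_n) = T(a_n) \wedge T(b_n)$; passing to the relative uniform limit of both sides, using continuity of $\overline{T}$ on the left and continuity of $\wedge$ in $B$ on the right, yields the desired identity. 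The right-hand limits exist inside $B$ precisely because $B$ is relatively uniformly complete.

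To extend to arbitrary $z \in \overline{A}$, I would apply the positive case to the disjoint pair $z^{+}, z^{-} \geq 0$ to obtain $\overline{T}(z^{+}) \wedge \overline{T}(z^{-}) = \overline{T}(z^{+} \wedge z^{-}) = 0$, and hence $|\overline{T}(z)| = \overline{T}(z^{+}) + \overline{T}(z^{-}) = \overline{T}(|z|)$, which is equivalent to $\overline{T}$ being a lattice homomorphism. The principal technical obstacle, and the only place where completeness of $B$ plays an essential role, is ensuring that the relative uniform limit of $T(a_n) \wedge T(b_n)$ lands in $B$ itself rather than merely in some Dedekind completion; the uniform continuity of $\wedge$ (via the inequality above applied in $B$) together with the rel.\ unif.\ completeness delivers this.
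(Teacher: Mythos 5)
Your proof is correct, but it takes a genuinely different route from the paper's. The paper also starts from the extension $\overline{T}$ of \cite{Ref14}, but instead of approximating by relatively uniformly convergent sequences it works directly with the defining formula $\overline{T}(x)=\sup \left\{ T(a):0\leq a\leq x,\ a\in A\right\}$: for $x\in \overline{A}$ it evaluates $\overline{T}(x^{+})\wedge \overline{T}(x^{-})$ as an iterated supremum of the terms $T(a)\wedge T(b)=T(a\wedge b)=0$ over $a\in \left[ 0,x^{+}\right] \cap A$ and $b\in \left[ 0,x^{-}\right] \cap A$, using distributivity of $\wedge$ over upward directed suprema, and then concludes from the criterion that a positive operator $S$ with $S(x^{+})\wedge S(x^{-})=0$ for all $x$ is a lattice homomorphism (invoking semiprimeness of $B$ to pass between the product and the infimum). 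You instead prove the full identity $\overline{T}(x\wedge y)=\overline{T}(x)\wedge \overline{T}(y)$ for all positive $x,y$ by sequential relative uniform approximation together with the inequality $\left\vert a\wedge b-c\wedge d\right\vert \leq \left\vert a-c\right\vert +\left\vert b-d\right\vert$ (which, incidentally, holds in every Riesz space, not just Archimedean ones), and only then specialize to the pair $z^{+},z^{-}$. What your version buys: it is more self-contained, it proves a formally stronger intermediate statement, and it never uses that $B$ is semiprime, so it shows that hypothesis is dispensable for this proposition; what the paper's version buys is brevity, since it verifies only the disjointness condition and needs no regulators or sequences. One small correction of emphasis: the relative uniform completeness of $B$ is what guarantees that $\overline{T}$ exists at all (the suprema in the formula above must exist in $B$); it is not needed to make the limits of $T(a_{n})\wedge T(b_{n})$ land in $B$, since once $\overline{T}$ is in hand both sides of your identity converge to already-named elements of $B$.
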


\begin{proof}
Suppose that $T$ is a lattice homomorphism. Let $x\in \overline{A}$. Let $%
a\in \left[ 0,x^{+}\right] \cap A$ and $b\in \left[ 0,x^{-}\right] \cap A$.
Since $T$ is a lattice homomorphism, we have
\begin{equation*}
T\left( a\wedge b\right) =T\left( a\right) \wedge T\left( b\right) =0\text{.}
\end{equation*}%
On the other hand, it follows from the equality
\begin{equation*}
T\left( a\right) \wedge \overline{T}\left( x^{-}\right) =\sup \left\{
T\left( a\right) \wedge T\left( b\right) :0\leq b\leq x^{-},b\in A\right\}
\end{equation*}%
that%
\begin{equation*}
\overline{T}\left( x^{+}\right) \overline{T}\left( x^{-}\right) =\sup
\left\{ T\left( a\right) \wedge \overline{T}\left( x^{-}\right) :0\leq a\leq
x^{+},a\in A\right\} =0
\end{equation*}%
which its turn is equivalent to $\overline{T}$ is a lattice homomorphism, as
$B$ is semiprime. Converse is trivial.
\end{proof}

In this point, we remark that Lemma 3.1 and Theorem 3.3 in \cite{Ref3} are also true for Archimedean semiprime $f$-algebras without the Stone
condition on the domain $A$ whenever $B$ is relatively uniformly complete.

\begin{proposition}
\label{p6}Let $A$ and $B$ be Archimedean semiprime $f$-algebras, $B$
relatively uniformly complete and $T:A\rightarrow B$ a contractive operator.
Assume that $\overline{A}$ has unit element. For $y\in \overline{A}$, define
$H_{x}\left( y\right) =\overline{T}\left( xy\right) -\overline{T}\left(
x\right) \overline{T}\left( y\right) $. Then $\overline{T}\overset{\_}{+}%
H_{x}$ are contractive mappings for all $x\in \overline{A}\cap \left[ 0,I%
\right] $.
\end{proposition}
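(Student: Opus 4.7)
The plan is to verify, for both $\overline{T}+H_x$ and $\overline{T}-H_x$, the two defining conditions of a contractive operator: positivity on $\overline{A}$, and sending $\overline{A}\cap[0,I]$ into $B\cap[0,I_B]$. By Proposition~\ref{p2}, $\overline{T}$ itself is contractive, which supplies the starting estimates $0\le \overline{T}(x)\le I_B$, $\overline{T}(I)\le I_B$, and $\overline{T}(y)\le I_B$ whenever $0\le y\le I$.

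The first step is to rewrite $H_x$ so that positivity becomes transparent. Directly from the definition,
\[
(\overline{T}+H_x)(y)=(I_B-\overline{T}(x))\,\overline{T}(y)+\overline{T}(xy),
\]
\[
(\overline{T}-H_x)(y)=\overline{T}((I-x)y)+\overline{T}(x)\,\overline{T}(y),
\]
where the first identity uses that $I_B$ is the identity of $OrthB$ and the second uses $I$ as the unit of $\overline{A}$. When $x\in[0,I]$ and $y\ge 0$, each summand on the right is a product of positive factors (a positive orthomorphism applied to a positive element of $B$, a product of two positive elements of $B$, or the image under the positive map $\overline{T}$ of a positive element of $\overline{A}$), so both $\overline{T}+H_x$ and $\overline{T}-H_x$ are positive.

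To obtain the upper bound $I_B$ on $[0,I]$, I would use basic $f$-algebra order: for $0\le y\le I$ and $x\ge 0$ one has $xy\le x$ and $(I-x)y\le I-x$, hence by positivity of $\overline{T}$, $\overline{T}(xy)\le \overline{T}(x)$ and $\overline{T}((I-x)y)\le \overline{T}(I)-\overline{T}(x)\le I_B-\overline{T}(x)$. The inequality $\overline{T}(y)\le I_B$ in $OrthB$, multiplied by the positive orthomorphisms $I_B-\overline{T}(x)$ and $\overline{T}(x)$, yields $(I_B-\overline{T}(x))\overline{T}(y)\le I_B-\overline{T}(x)$ and $\overline{T}(x)\overline{T}(y)\le \overline{T}(x)$. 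Adding the two summands in each of the identities above gives the bound $(I_B-\overline{T}(x))+\overline{T}(x)=I_B$ in either case, which is exactly what is required.

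The only real subtlety is keeping track of the canonical embedding $B\hookrightarrow OrthB$: expressions such as $I_B-\overline{T}(x)$ live in $OrthB$ but act on elements of $B$ to produce elements of $B$, so each inequality and each product must be interpreted in the appropriate ordered algebra. Beyond that, nothing more than standard order properties of $f$-algebras and the contractiveness of $\overline{T}$ supplied by Proposition~\ref{p2} is needed, so I do not anticipate a genuine obstacle.
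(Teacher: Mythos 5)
Your proof is correct, but it is genuinely different from the one in the paper. The paper's proof of Proposition~\ref{p6} is a two-line reduction: it notes that $\overline{T}$ is contractive by Proposition~\ref{p2} and that $\overline{A}$ satisfies the Stone condition, and then simply cites Lemma~3.1 of Ben Amor--Boulabiar--El Adeb \cite{Ref3} for the conclusion. You instead give a self-contained computation: the decompositions $(\overline{T}+H_x)(y)=(I_B-\overline{T}(x))\overline{T}(y)+\overline{T}(xy)$ and $(\overline{T}-H_x)(y)=\overline{T}((I-x)y)+\overline{T}(x)\overline{T}(y)$ are algebraically correct, exhibit positivity immediately (each summand is a product of positive factors once $0\le x\le I$ and contractivity give $I_B-\overline{T}(x)\ge 0$), and the upper bound follows by pairing $\overline{T}(xy)\le\overline{T}(x)$ with $(I_B-\overline{T}(x))\overline{T}(y)\le I_B-\overline{T}(x)$, and $\overline{T}((I-x)y)\le I_B-\overline{T}(x)$ with $\overline{T}(x)\overline{T}(y)\le\overline{T}(x)$, each pair summing to $I_B$. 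In effect you have reproved the cited lemma in the unital case; note that your argument genuinely uses the unit $I$ of $\overline{A}$ (e.g.\ to form $(I-x)y$ and to get $\overline{T}(I)\le I_B$), whereas the lemma in \cite{Ref3} is stated for Stone $f$-algebras that need not be unital. Since the proposition hypothesizes that $\overline{A}$ is unital, this costs nothing here; what your route buys is independence from \cite{Ref3}, at the price of a longer argument and the bookkeeping of the embedding $B\hookrightarrow \mathrm{Orth}\,B$, which you handle correctly.
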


\begin{proof}
By Proposition \ref{p2}, $\overline{T}$ is contractive. Since $\overline{A}$
satisfies the Stone condition, due to \cite[Lemma~3.1] {Ref3},
we have the conclusion.
\end{proof}

\begin{corollary}
\label{c4}Let $A$ and $B$ be Archimedean semiprime $f$-algebras such that $B$
is relatively uniformly complete and let $T:A\rightarrow B$ be a contractive
operator. If $\overline{A}$ has unit element, then \thinspace $T$ $\overset{%
\_}{+}T_{a}$ are contractive for all $a\in A\cap \left[ 0,I\right] $, here $%
T_{a}\left( b\right) =T\left( ab\right) -T\left( a\right) T\left( b\right) $.
\end{corollary}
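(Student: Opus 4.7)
The plan is to obtain Corollary \ref{c4} as a direct restriction of Proposition \ref{p6}. The hypothesis gives that $\overline{A}$ has a unit element $I$, that $B$ is relatively uniformly complete, and that $T:A\to B$ is contractive, so Proposition \ref{p6} applies. Given $a\in A\cap[0,I]$, I would simply note that $a\in\overline{A}\cap[0,I]$ (since $A\subseteq\overline{A}$), and therefore $\overline{T}\overset{\_}{+}H_{a}$ are contractive as operators from $\overline{A}$ to $B$, where $H_{a}(y)=\overline{T}(ay)-\overline{T}(a)\overline{T}(y)$.

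Next I would restrict everything in sight to the subalgebra $A\subseteq\overline{A}$. For any $b\in A$, since $A$ is an algebra $ab\in A$, and because $\overline{T}$ extends $T$ we get $\overline{T}(ab)=T(ab)$, $\overline{T}(a)=T(a)$, $\overline{T}(b)=T(b)$. Consequently
\begin{equation*}
H_{a}(b)=\overline{T}(ab)-\overline{T}(a)\overline{T}(b)=T(ab)-T(a)T(b)=T_{a}(b),
\end{equation*}
so $(\overline{T}\overset{\_}{+}H_{a})|_{A}=T\overset{\_}{+}T_{a}$ as operators from $A$ into $B$.

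Finally, I would argue that contractivity descends to the restriction. If $b\in A\cap[0,I]$, then $b\in\overline{A}\cap[0,I]$, so by contractivity of $\overline{T}\overset{\_}{+}H_{a}$ one has $(\overline{T}\overset{\_}{+}H_{a})(b)\in[0,I_{B}]$; and the value lies in $B$ by the explicit formula $T(b)\overset{\_}{+}(T(ab)-T(a)T(b))$. Hence $(T\overset{\_}{+}T_{a})(b)\in B\cap[0,I_{B}]$, which is precisely contractivity of $T\overset{\_}{+}T_{a}$.

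There is essentially no obstacle here; the corollary is a pure specialization of Proposition \ref{p6} to elements of $A\subseteq\overline{A}$, together with the elementary observation that the extension $\overline{T}$ restricted to $A$ is $T$. The only point to state carefully is the identification $H_{a}|_{A}=T_{a}$, which follows from the subalgebra property of $A$ in $\overline{A}$.
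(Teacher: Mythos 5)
Your proposal is correct and follows essentially the same route as the paper: apply Proposition \ref{p6} to $a\in A\cap[0,I]\subseteq\overline{A}\cap[0,I]$, observe that $\bigl(\overline{T}\overset{\_}{+}H_{a}\bigr)\big|_{A}=T\overset{\_}{+}T_{a}$ because $\overline{T}$ extends $T$ and $A$ is a subalgebra of $\overline{A}$, and then read off positivity and the bound by $I$ on $A\cap[0,I]$. Your explicit remark that $H_{a}|_{A}=T_{a}$ just makes precise what the paper's displayed identity uses implicitly.
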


\begin{proof}
By Proposition \ref{p6}, $\overline{T}\overset{\_}{+}H_{x}$ are contractive
mappings for all $x\in \overline{A}\cap \left[ 0,I\right] $. Let $a\in A\cap %
\left[ 0,I\right] $ and $0\leq b\in A$. Then $0\leq \left( \overline{T}%
\overset{\_}{+}H_{a}\right) $ $\left( b\right) =T\left( b\right) \overset{\_}%
{+}T_{a}\left( b\right) $ holds. Thus $T\overset{\_}{+}T_{a}$ is positive.
Let $b\in A\cap \left[ 0,I\right] $.$\,$It follows from%
\begin{equation*}
0\leq \left( \overline{T}\overset{\_}{+}H_{a}\right) \left( b\right)
=T\left( b\right) \overset{\_}{+}T_{a}\left( b\right) \leq I
\end{equation*}%
that $T$ $\overset{\_}{+}T_{a}$ are contractive.
\end{proof}

\begin{proposition}
\label{p7}Let $A$ and $B$ be Archimedean semiprime $f$-algebras such that $B$
is relatively uniformly complete and let $T:A\rightarrow B$ be a positive
linear operator. $T$ is contractive and it is an extreme point in the
collection of all contractive operators from $A$ to $B$ if and only if $T$
is an algebra homomorphism.
\end{proposition}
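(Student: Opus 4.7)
The plan is to lift the equivalence to $\overline{T}:\overline{A}\to B$ via the relatively uniform completion, and then apply the Stone-condition machinery of Proposition \ref{p6}. Since $\overline{A}$ satisfies the Stone condition (by \cite[Theorem~2.5]{Ref7}) and is itself a relatively uniformly complete Archimedean semiprime $f$-algebra, Proposition \ref{p6} is available for $\overline{T}$. Combining the two lifting equivalences already established in this section---namely, ``$T$ is an algebra homomorphism iff $\overline{T}$ is'' (Proposition \ref{p4}) and ``$T$ is extreme among contractive operators from $A$ to $B$ iff $\overline{T}$ is extreme among contractive operators from $\overline{A}$ to $B$'' (Proposition \ref{p3})---it therefore suffices to prove the stated equivalence at the level of $\overline{T}$.

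For the ``$\Leftarrow$'' direction I would assume $\overline{T}$ is a positive algebra homomorphism; contractivity then follows from \cite[Theorem~4.3]{Ref14}, exactly as in the opening of Theorem \ref{t2}. If $2\overline{T}=S_1+S_2$ with $S_1,S_2$ contractive operators from $\overline{A}$ to $B$, then the Stone-condition form of \cite[Theorem~3.3]{Ref3}---now applicable since the Stone condition holds on $\overline{A}$---forces $S_1=S_2=\overline{T}$. For the ``$\Rightarrow$'' direction I would apply Proposition \ref{p6}: for each $x\in\overline{A}\cap[0,I]$ the two contractive operators $\overline{T}+H_x$ and $\overline{T}-H_x$ have mean $\overline{T}$, so extremality of $\overline{T}$ forces $H_x=0$, that is, $\overline{T}(xy)=\overline{T}(x)\overline{T}(y)$ for every $y\in\overline{A}$. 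The homogeneity $H_{cx}=cH_x$ extends this to every positive $x\in\overline{A}$ that is dominated by some multiple of $I$; for an arbitrary $0\le x\in\overline{A}$, I would use the Stone condition to realize $x\wedge nI\in\overline{A}$, conclude $H_{x\wedge nI}=0$, and pass to the limit as $n\to\infty$. Linearity in $x$ then yields that $\overline{T}$ is an algebra homomorphism, and Proposition \ref{p4} transfers the conclusion back to $T$.

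The step I expect to be the main obstacle is the very last move of the ``$\Rightarrow$'' direction: propagating the multiplicative identity $\overline{T}(xy)=\overline{T}(x)\overline{T}(y)$ from the Stone-bounded cone $\{x\in\overline{A}:0\le x\le nI \text{ for some } n\}$ to all of $\overline{A}_{+}$. One must verify that $\overline{T}$ and the multiplication of $\overline{A}$ interact correctly with the suprema $x=\sup_{n}(x\wedge nI)$ supplied by the Stone condition. It is here that the relative uniform completeness of $B$ is decisive---it supplies the supremum formula for $\overline{T}$ and legitimizes the limiting step---and this is precisely the technical issue that the remark preceding Proposition \ref{p6}, extending \cite[Theorem~3.3]{Ref3} to the setting without the Stone condition on the domain, is designed to address.
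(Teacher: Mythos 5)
Your overall architecture is the same as the paper's: reduce everything to $\overline{T}:\overline{A}\to B$ via Propositions \ref{p2}, \ref{p3} and \ref{p4}, and then settle the equivalence at the level of $\overline{A}$, where the Stone condition holds. Your ``$\Leftarrow$'' direction is essentially the paper's argument: \cite[Theorem~4.3]{Ref14} gives contractivity, Propositions \ref{p2} and \ref{p4} transfer contractivity and multiplicativity to $\overline{T}$, and \cite[Theorem~3.3]{Ref3} (applicable since $\overline{A}$ satisfies the Stone condition) gives extremality of $\overline{T}$, which Proposition \ref{p3} pulls back to $T$.

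The problem is your ``$\Rightarrow$'' direction. The paper simply applies \cite[Theorem~3.3]{Ref3} to $\overline{T}$ again --- that theorem is an equivalence, so extremality of $\overline{T}$ yields that $\overline{T}$ is an algebra homomorphism with no further work, and Proposition \ref{p4} finishes. You instead try to re-derive this implication from Proposition \ref{p6}, and that route has two genuine gaps. First, Proposition \ref{p6} carries the hypothesis that $\overline{A}$ has a unit element; Proposition \ref{p7} does not assume this, and the Stone condition ($x\wedge nI\in\overline{A}$ with $I$ the identity of $\mathrm{Orth}\,A$) does not put $I$ itself inside $\overline{A}$, so you are invoking a lemma whose hypotheses you have not secured. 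Second, even granting $H_x=0$ for $x\in\overline{A}\cap[0,I]$, your extension to arbitrary $0\le x\in\overline{A}$ via $x=\sup_n(x\wedge nI)$ is only sketched: you would need to justify that $\overline{T}$ and the multiplication commute with this supremum in $B$ (which is relatively uniformly complete but not assumed Dedekind complete), and you explicitly flag this limiting step as unresolved rather than proving it. Since you already cite \cite[Theorem~3.3]{Ref3} in the other direction, the repair is immediate: use the same theorem for the forward implication, as the paper does, and drop the detour through Proposition \ref{p6} entirely.
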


\begin{proof}
Let $T$ be an extreme point in the collection of all contractive operators
from $A$ to $B$. Then by Proposition \ref{p3}, $\overline{T}$ is an extreme point
of all contractive operators from $\overline{A}$ to $B$. It follows from \cite[Theorem~3.3] {Ref3} that $\overline{T}$ is an algebra
homomorphism. By Proposition \ref{p4}, $T$ is an algebra homomorphism.
Conversely, if $T$ is an algebra homomorphism, then due to \cite[Theorem~4.3] {Ref14}, $T$ is a contractive operator. By Proposition \ref{p2}, $%
\overline{T}$ is contractive and by Proposition \ref{p4}, $\overline{T}$ is an
algebra homomorphism. Thus $\overline{T}$ is an extreme point in the set of
all contractions from $\overline{A}$ to $B$ due to \cite[Theorem~3.3] {Ref3}. By using Proposition \ref{p3}, we have the conclusion.
\end{proof}

\end{document}